\documentclass[10pt,a4paper]{article}

\usepackage{amsthm}
\usepackage{amsmath}
\usepackage{amsfonts}
\usepackage{amssymb}
\usepackage{graphicx}%

%
\usepackage[hyperindex = true, colorlinks = true, linktocpage = true, linkcolor=blue, citecolor=blue, bookmarks, pdftitle={Determination of the homotopy classes of maps on surfaces of genus $g$ with infinitely many periodic points}, pdfauthor={Joerg Kampen}]{hyperref}

\usepackage[numbers]{natbib}
\setcounter{MaxMatrixCols}{30}
\newtheorem{theorem}{Theorem}

\newtheorem{corollary}[theorem]{Corollary}

\newtheorem{definition}[theorem]{Definition}

\newtheorem{lemma}[theorem]{Lemma}

\newtheorem{remark}[theorem]{Remark}

\def\beas{\begin{eqnarray*}}
\def\eeas{\end{eqnarray*}}
\def\bea{\begin{eqnarray}}
\def\eea{\end{eqnarray}}
\def\be{\begin{equation}}
\def\ee{\end{equation}}
\def\bes{\begin{equation*}}
\def\ees{\end{equation*}}
\def\bi{\begin{itemize}}
\def\ei{\end{itemize}}

\renewenvironment{proof}[1][Proof]{\noindent\textbf{#1.} }{\ \rule{0.5em}{0.5em}}
\begin{document}
\title{Determination of the homotopy classes of maps on compact orientable surfaces of positive genus $g$ with infinitely many periodic points (part I) 
}
\author{ J\"org Kampen$^1$}
\maketitle 
\footnotetext[1]{
{\tt kampen@wias-berlin.de}}

\begin{abstract}
We consider the homotopical dynamics on compact orientable surfaces of positive genus $g$. We establish a sufficient and necessary algebraic criterion for homotopy classes  with infinitely many periodic points of  maps on such surfaces in terms of the characteristic polynomial of the matrix representing the correspondig homomorphism of the first homology group. Our methods differ from that of the literature, which typically uses Nielsen number theory, because Nielsen numbers of iterates are difficult to compute except in special cases where they can be easily defined in terms of the easily computable Lefschetz number. Instead we use some fixed point theory from the point of view of real algebraic geometry, and some intersection theory in the context of De Rham theory on surfaces and combine it with some observations of  dynamical system theory which started from the work of Sarkovskii. However, in the special case of the homotopical dynamics of a two dimensional torus, where the Nielsen number is determined by the modulus of the Lefschetz number, we recover a result in \cite{ABLSS}.

 {\it Keywords:}   infinitely periodic points, real algebraic geometry, fixed point theory, surfaces of genus $g$.
\\[0.6ex] {\it 2000 AMS subject
classification:}  37C25
\end{abstract}

\section{Introduction}

Consider a map $f:X\rightarrow X$ on a triangulable compact surface $X$. In general it is well-known that a complete set of homeomorphism classes of surfaces is given by the set of spheres with $g$ handles and spheres with $k$ crosscaps where $g$ and $k$ are arbitrary nonnegative integers. The former class of spheres with $g$ handles are singled out by orientability (cf. \cite{SeT}). In the case $g=0$ any compact orientable surface is homeomorphic to the ordinary sphere $S^2$. However, this case is special because the sphere is simply connected. Especially, its first order homology group is zero, and for this reason the method proposed here cannot be applied directly in this case. Hence, we assume $g\geq 1$ in the following. In order to investigate fixed points of iterated maps on the surface $X$ we shall consider realizations of $X$ by spheres with $g$ handles or by $4g$-sided polygons in normal form with the usual identification as a starting point of our considerations of the discrete dymamics without loss of generality , i.e. we consider iterates $f^n,~n\geq 1$ of $f$ on such realizations. For an integer $k$ we say that $f$ has a periodic point of order $k$ if 
\begin{equation}\label{per1}
f^k(x)=x\mbox{ and }f^i(x)\neq x \mbox{ for }~1\leq i\leq k-1.
\end{equation}
If  the set
\begin{equation}
\mbox{Per}(f):=\left\lbrace k|f~\mbox{has a periodic point of order $k$ }\right\rbrace 
\end{equation}
is of infinite cardinality, then we say that $f$ has infinitely many periodic points. In this paper we aim at a sufficient and necessary condition for the infinite cardinality of the set  minimal periods attained within a homotopy class of a given map $f:X\rightarrow X$, where $X$ is an orientable triangulable compact surface of genus $g\geq 1$ (or some homeomorphic realization such as a sphere with $g\geq 1$ handles $S_g$, or a $4g$-sided polygon in normal form with the usual identifications of sides). If a map $h:X\rightarrow X$ is homotop to a map $f:X\rightarrow X$, then we write $h\simeq f$, and we define the minimal set of periods of $f$ to be the set
\begin{equation}
\mbox{MPer}\left(f\right)=\cap_{h\simeq f} \mbox{Per}\left(h\right). 
\end{equation}
If $\mbox{MPer}\left(f\right)$ is of infinite cardinality, then we say that the homotopy class of $f$ has infinitely many periodic points.
The standard method for investigation of $\mbox{MPer}\left(f\right)$ is via Nielsen number theory. The Nielsen number $N(f)$ of a map $f$ defined on a compact Euclidean neighborhood retracts is a homotopy invariant which gives a lower bound for the number of fixed points of $f$ (for a definition compact Euclidean neighborhood retracts cf. \cite{D}). In the special case of an $n$-torus the Nielsen number can be defined to be the modulus of the Lefschetz number by a result in \cite{BBPT}, i.e. for a map $f:{\mathbb T}^n\rightarrow {\mathbb T}^n$ we may define
\begin{equation}\label{NL}
N(f)=|L(f)|,
\end{equation}
where
\begin{equation}
L(f)=\sum_{k=0}^ m(-1)^k\mbox{Trace}\left(f_{*k}\right), 
\end{equation}
and where $f_{*k}:H_k\left(X,{\mathbb Z} \right)\rightarrow H_k\left(X,{\mathbb Z} \right)  $ is the $k$th order homology morphism of $f$. This relation is important, since the Lefschetz number is easy to compute while the Nielsen number is difficult to compute in general. This is also the essential result used in \cite{ABLSS} in order to give a full characterization of minimal periods of maps on the $2$-dimensional torus. However, (at least to the extent of my knowledge) next to the $n$-torus there are only very special manifolds where the relation (\ref{NL}) holds. Especially compact orientable surfaces of genus $g$ are not known to be among this class (in the literature the name '$n$-fold torus' for a sphere with $g$ handles may lead to confusion here; the $n$-torus is the higher dimensional object which may be obtained by identification of opposite sides of a hypercube).   
This motivates a theory which avoids the Nielsen number theory and goes back to some very elementary considerations concerning fixed point theory from the point of view of real algebraic geometry. Other ingredients are also very elementary. They consist of elementary intersection theory and observation in dynamical system theory. Especially Sarkovskii observed that period three of a continuous map $f:\left[0,1\right]\rightarrow  \left[0,1\right]$ implies all other periods. The algebraic condition which we establish is defined in terms of the first homology group morphism associated with the map $f:X\rightarrow X$.  More precisely, let
\begin{equation}
f_{*1}:H_1\left(X,{\mathbb Z} \right)\rightarrow H_1\left(X,{\mathbb Z} \right) 
\end{equation}
denote the induced first homology map, corresponding to some $2g\times 2g$ matrix of integers if $X$ is an orientable compact surface of positive genus $g$. In the following we shall write $H_1X$ instead of $H_1\left(X,{\mathbb Z} \right)$ for brevity of notation. Recall that the first homology functor is also homotopy invariant.
In order to prove that a map $f:X\rightarrow X$ has infinitely many periodic points it suffices to find for each prime number $p$ a fixed point free area $S_p\subset X$ such that
\begin{equation}
 f^p(x)=x \mbox{ and  }x\in S_p,
\end{equation}
for infinitely many $p\in \mbox{Prim}$  where $\mbox{Prim}$ denotes the set of prime numbers. For if $f^i(x)=x$ for some $1\leq i\leq p-1$, where $p$ is a prime number, then we have $\gcd(i,p)=1$, and this this would imply that $il+pk=1$ for some integers $k,l\in{\mathbb Z}$, and where one integer is positive and the other negative. If $l<0$ then we have
\begin{equation}
x=f^{pk}=f^{(-l)i+1}(x)=f\left( f^{(-l)i}(x)\right) =f(x),
\end{equation}
in contradiction to the assumption that $x\in S_p$ and $S_p$ is assumed to be fixed-point free. Similarly, if $k<0$ then we have 
\begin{equation}
x=f^{li}=f^{(-k)p+1}(x)=f\left( f^{(-k)p}(x)\right) =f(x),
\end{equation}
again in contradiction to the assumption that $x\in S_p$, because $S_p$ is assumed to be fixed-point free. It is well known that the first homology group of a sphere with $g$ handles is a free abelian group of rank $2g$, and we denote the matrix corresponding to the first homology homomorphism by $M_{2g}:{\mathbb Z}^{2g}\rightarrow {\mathbb Z}^{2g}$. As the matrix $M_{2g}$ encodes some information of the map $f:X\rightarrow X$, iterates of the matrix $M_{2g}$ encode some information of the iterates $f^m :X\rightarrow X$. If the matrix $M_{2g}$ is diagonalizable with some matrix $S$, i.e. $S^{-1}AS=\Lambda$ for some diagonal matrix $\Lambda$, then parallel to topological conjugacy we have
\begin{equation}
M_{2g}^n=S^{-1}\Lambda^n S.
\end{equation}
Hence in this case we see easily that essential information of the iterates of the first homology homomorphism is encoded in the characteristic polynomial $p$ of the matrix $M_{2g}$. Let $p$ denote the characteristic polynomial of the matrix $M_{2g}$. In the general case we have a similar relation with a Jordan normal form over the complex numbers ${\mathbb C}$ where iterates are iterates of Jordan blocks of the Jordan matrix. In this paper we shall derive algebraic conditions in terms of the matrix $M_{2g}$ and its Jordan normal form over the comples numbers ${\mathbb C}$ (resp. the zeros of its characteristic polynom) which are necessary and sufficient for a map on compact orientable surfaces to have homotopy classes with infinitely many periodic points. Note that analytically a full characterization of these homotopy classes may be possible only for the $2$-torus and the 'Brezel'-surface, where $g=2$ (Abel's no go). However, even in the general case we are able to get a decidable (even efficiently decidable) algebraic condition. In this first part of the paper we state and prove an algebraic condition for homotopy classes with infinitely many periodic points.

The outline of the paper is as follows. In the next section we state the main theorem which provides a necessary and sufficient algebraic condition in order to decide whether all the maps of a homotopy class on a given compact orientable surface $X$ of positive genus $g\geq 1$ have infinitely many periodic points. We also prove a corollary which covers a result for compact orientable surfaces of genus 1, i.e. surfaces homeomorphic to the $2$-torus. This corollary is also proved in the literature by the use of Nielsen number theory. Then in section 3 we consider some fixed point theory from the point of view of real algebraic geometry. We pove an extension of Brouwer's fixed point theory and show that fixed points for a dense class of mappings of polynomial type can be constructed as intersections of algebraic curves. It is easily observed that a fixed point property is preserved by a continuous mapping constructed as a limit of certain mappings of polynomial type. Finally in section 4 we prove the main theorem using fixed point theory from the point of view of real algebraic geometry. We extend our considerations of section 3 on fixed points as intersection of algebraic curves. If the algebraic condition of the main theorem is satisfied, we can identify fixed-point free semi-algebraic sets which contain fixed points of iterates which are intersections of algebraic curves which are homoptop to loops corresponding to basic loops of the homotopy group of the surface $X$. Elementary intersection theory in the framework of De Rham cohomology tells us that the intersections are not empty. 

\section{Statement of main theorem}
Given a compact orientable surface $X$ of positive genus $g$ we know that it is homeomorphic to a sphere with $g$ handles and can be realized via a $4g$-sided polygon $\Pi$ with the usual identifications. Given $2g$ symbols $\alpha_i,~1\leq i\leq g$ and $\beta_i,~1\leq i\leq g$, assume that the normal form of the $4g$-sided polygon $P_{4g}$ realizing $X$ is given by the code
\begin{equation}\label{pathc}
\alpha_1\cdot\beta_1\cdot\alpha_1^{-1}\cdot\beta_1^{-1}\cdot\alpha_2\cdot\beta_2\cdot\alpha_2^{-1}\cdot\beta_2^{-1}\cdot \cdots\cdot \alpha_g\cdot \beta_g\cdot\alpha_g^{-1}\cdot \beta_g^{-1},
\end{equation}
where we may think of the symbols $\alpha_i,\beta_i,~1\leq i\leq g$ to be the sides of a polygon (corresponding to certain loops of the surface $X$). We may also think of the symbols $\alpha_i,~1\leq i\leq g$ and $\beta_i,~1\leq i\leq g$ as  elements of equivalence classes of the fundamental group $\pi_1\left( X,x\right) $ for some $x\in X$ which may correspond to a kink of the $4g$-sided polygon. Then we may interprete (\ref{pathc}) as a product path $\gamma$ (the path around the sides of the convex polygon $\Pi$). The path corresponds to an element in a free abelian group of $2g$ generators $F_{2g}$ and its least normal subgroup $N_{2g}$ leads us to the isomorphism
\begin{equation}
\pi_1\left(X,x\right)=F_{2g}/N_{2g}, 
\end{equation}
by use of van Kampen's theorem. It follows that $H_1X$ is a free abelian group of rank $2g$ with basis the image of the loops $\alpha_1,\beta_1,\cdots ,\alpha_g,\beta_g$ (well, this basis is designed for the fundamental group where a base point has to be prescribed, later we shall use the natural basis of the first homology group). Next, assume that the
 $2g\times 2g$-matrix $M_{2g}$ represents the first order homology homomorphism with respect to the order of the normal form.
The matrix $M_{2g}$ is similar to its Jordan normal form (over the complex numbers ${\mathbb C}$)
\begin{equation}\label{Jordan} J= \begin{pmatrix} J_1 & & 0 \\ & \ddots & \\ 0 & & J_k \end{pmatrix} = S^{-1}M_{2g}S,
\end{equation}
where $S$ is an invertible matrix and for $1\leq i\leq k$ the Jordan blocks of dimension $n_i$ are given in the form
\begin{equation}
     J_i= \begin{pmatrix} \lambda_i & 1 & & & 0 \\ & \lambda_i & 1 & & \\ && \ddots{} & \ddots{}\\ &&& \lambda_i & 1 \\ 0 & & & & \lambda_i \end{pmatrix} ,
\end{equation}
and with $\sum_{i=1}^k n_i=2g$. We denote by $d_j$ the $j$th diagonal element of $J$. Note that for any $j$ there is a unique number $l$ with $0\leq l\leq k-1$ such that $j=\sum_{i=1}^{l}n_i+r$ with $1\leq r\leq n_{l+1}$ (we denote $\sum_{i=1}^{0}n_i=0$). Then $d_j=\lambda_{l+1}$. Each pair of diagonal elements $(d_{2i-1},d_{2i}),~1\leq i\leq g$ corresponds to the pair  $(a_i,b_i)$, where $a_i$ and $b_i$ belong to the standard basis of the first homology group of $X$. For a given $j$ with $1\leq j\leq g$, we say that a pair $(d_{2j-1},d_{2j})$ is hyperbolic if either
\begin{equation}\label{condition2}
|d_{2j-1}|< 1,|d_{2j}|>1,
\end{equation}
or
\begin{equation}\label{condition3}
|d_{2j-1}|> 1,|d_{2j}|<1
\end{equation}
is satisfied. Furthermore if for a given $j$ with $1\leq j\leq g$ we have
\begin{equation}\label{condition1}
|d_{2j-1}|> 1,|d_{2j}|>1,
\end{equation}
then we say that a pair $(d_{2j-1},d_{2j})$ is expansive.
\begin{theorem}
Let $f:X\rightarrow X$ be a compact orientable surface of genus $g$ with $g\geq 1$, and assume that the induced homology homomorphism
\begin{equation}
\left(f_X\right)_{*1}:H_1X\rightarrow H_1X 
\end{equation}
has a matrix representative $M_{2g}$.
Let $J=S^{-1}M_{2g}S$ be the Jordan normal form over ${\mathbb C}$ of the matrix $M_{2g}$. If there is an index $1\leq i\leq g$ and a pair of diagonal elements $(d_{2i-1},d_{2i})$ of $J$ which are either hyperbolic (i.e. either condition (\ref{condition2}) or condition (\ref{condition3}) is satsified) or expansive (i.e. (\ref{condition1}) is satisfied), then 
\begin{equation}
\mbox{card}\left( \mbox{MPer}(f)\right)=\infty, 
\end{equation}
where $\mbox{card}(M)$ denotes the cardinality of a set $M$. Note that conditions (\ref{condition2}) and (\ref{condition3}) are only realized if $d_i$ and $d_{i+1}$ belong to different Jordan blocks.
\end{theorem}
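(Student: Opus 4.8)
The plan is to exhibit, for infinitely many primes $p$, a semi-algebraic set $S_p\subset X$ which contains no fixed point of $f$ but which does contain a point $x$ with $f^{p}(x)=x$; by the divisibility argument recalled in the introduction such an $x$ has minimal period exactly $p$. Because the first homology homomorphism — and with it $M_{2g}$, its Jordan form $J$ and the diagonal entries $d_j$ — is a homotopy invariant, both the sets $S_p$ and the primes $p$ will be governed by the homology data alone, so that $p\in\mathrm{Per}(h)$ for every $h\simeq f$, hence $p\in\mathrm{MPer}(f)$; infinitude of such primes then gives $\mathrm{card}(\mathrm{MPer}(f))=\infty$.

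The first step is to turn the hypothesis into growth. Let $\lambda$ be whichever of $d_{2i-1},d_{2i}$ has modulus $>1$ (one such exists in both the hyperbolic and the expansive case), and let $V_i\subset{\mathbb C}^{2g}$ be the $M_{2g}$-invariant (generalized) eigen-subspace attached to the $i$th index. From $M_{2g}^{p}=SJ^{p}S^{-1}$ the powers $M_{2g}^{p}$ are unbounded, and the determinant $\ell_p:=\det\big((M_{2g}^{p}-I)|_{V_i}\big)$ is nonzero for every $p\geq1$: in the hyperbolic case the two eigenvalues in $V_i$ have modulus $\neq1$, in the expansive case modulus $>1$, so no power of either equals $1$, whence also $|\ell_p|\to\infty$. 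Translated back to $X$, the loops $f^{p}\circ\alpha_i$ and $f^{p}\circ\beta_i$ represent the homology classes $M_{2g}^{p}a_i$ and $M_{2g}^{p}b_i$, whose coefficients grow without bound, and, evaluating the intersection pairing in the standard symplectic basis $a_1,b_1,\dots,a_g,b_g$, $\ell_p$ is (up to sign) the algebraic self-intersection number of the pair of perturbed loops that will presently carry the fixed points of $f^{p}$.

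Next I realize fixed points of $f^{p}$ as intersections of algebraic curves, following Section 3. In a coordinate chart of $X$ adapted to the generators $\alpha_i,\beta_i$ (or in the universal cover, with the appropriate deck translate) write $f^{p}$ as $(u,v)\mapsto(\varphi_p(u,v),\psi_p(u,v))$; a fixed point is a common zero of $\varphi_p(u,v)-u$ and $\psi_p(u,v)-v$, i.e.\ a point of $C_1^{(p)}\cap C_2^{(p)}$ with $C_1^{(p)}=\{\varphi_p-u=0\}$ and $C_2^{(p)}=\{\psi_p-v=0\}$. Approximating $f^{p}$ uniformly by a map of polynomial type (Section 3) makes $C_1^{(p)},C_2^{(p)}$ genuine algebraic curves; by homotopy invariance of homology they are homotopic to loops winding on the order of $|\lambda|^{p}$ times around $\alpha_i$ resp.\ $\beta_i$, so that their homology classes, and hence the intersection number $\big[C_1^{(p)}\big]\cdot\big[C_2^{(p)}\big]=\pm\ell_p$, depend only on $M_{2g}$. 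Now intersection theory in De Rham cohomology applies: with closed $2$-forms $\eta_1,\eta_2$ Poincaré-dual to $C_1^{(p)},C_2^{(p)}$ one has $\int_X\eta_1\wedge\eta_2=\pm\ell_p\neq0$, so $C_1^{(p)}\cap C_2^{(p)}\neq\emptyset$; the extension of Brouwer's theorem of Section 3 — fixed points of polynomial-type maps produced as intersections of algebraic curves inside a prescribed semi-algebraic region, together with persistence of the fixed point under uniform limits — then pins a fixed point of $f^{p}$ into a semi-algebraic box $S_p$ inside the $i$th handle. Since the homology action on $V_i$ is expanding rather than the identity, the handle is not contained in $\mathrm{Fix}(f)$, so $\mathrm{Fix}(f)$ meets it in a proper closed set and $S_p$ can be chosen disjoint from it; the fixed point of $f^{p}$ in $S_p$ then has minimal period $p$, and letting $p$ range over the primes gives $\mathrm{card}(\mathrm{MPer}(f))=\infty$. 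The hyperbolic/expansive dichotomy enters only through the shape of $C_1^{(p)},C_2^{(p)}$ — one expanding and one contracting transverse direction versus two expanding directions — not through $\ell_p\neq0$.

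The step I expect to be the main obstacle is precisely this localization: passing from "$f^{p}$ has a fixed point in the handle" to "$f^{p}$ has a fixed point in a region carrying no fixed point of $f$", uniformly in the homotopy class and stably under the polynomial approximation. One must control the position of the forced intersection $C_1^{(p)}\cap C_2^{(p)}$ finely enough to keep it off $\mathrm{Fix}(f)$ — which for an arbitrary continuous $f$ in the class need not be finite — and one must verify that the fixed point obtained for the polynomial approximants does not escape $\overline{S_p}$ in the limit. The growth $|\ell_p|\to\infty$ together with the unbounded winding of $C_1^{(p)},C_2^{(p)}$ should force the intersection points to spread out enough for a good choice of $S_p$ to exist; making this quantitative, and handling the Jordan-block case where $C_1^{(p)},C_2^{(p)}$ are only approximately straight, is the technical heart of the argument.
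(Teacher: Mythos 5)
Your high-level strategy matches the paper's: for infinitely many primes $p$ build a semi-algebraic set $S_p$ that is free of fixed points of $f$ yet contains a fixed point of $f^p$, invoke the $\gcd(i,p)=1$ divisibility argument from the introduction to conclude that this point has minimal period $p$, and then use homotopy-invariance of $(f)_{*1}$ to promote $p\in\mathrm{Per}(f)$ to $p\in\mathrm{MPer}(f)$. You also correctly extract unbounded growth from the hyperbolic/expansive hypothesis. But the mechanism you propose for forcing a fixed point of $f^p$ into $S_p$ diverges from the paper in a way that opens a real gap.

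The paper does \emph{not} argue via a nonzero global intersection number of the fixed-point curves $\{\varphi_p-u=0\}$ and $\{\psi_p-v=0\}$. Instead it builds $S_p$ as a region bounded by level-set curves $\Gamma_{0p}\subseteq (f^p_{\mathrm{loc},2})^{-1}(0)$ and $\Gamma_{1p}\subseteq (f^p_{\mathrm{loc},2})^{-1}(1)$, perturbs to finitely many fixed points of $f$ inside $S_p$, excises tubular semi-algebraic strips around them, shows that the local representative of $f^p$ is expansive (resp. expansive/conservative) on what remains, and then invokes the Section~3 Brouwer-type theorem to produce the periodic point; the only intersection-theoretic input is $\langle a_i,b_i\rangle=1$ from the symplectic basis. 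Your step ``the intersection number $[C_1^{(p)}]\cdot[C_2^{(p)}]=\pm\ell_p$ with $\ell_p=\det\bigl((M_{2g}^p-I)|_{V_i}\bigr)$'' is unestablished: $C_1^{(p)},C_2^{(p)}$ are zero sets in a chart (or the universal cover), and turning them into closed $1$-cycles on $X$ with homology classes whose cup product equals $\ell_p$ is exactly the Lefschetz/Nielsen-index identity ($|L(f^p)|$, $|\det(M^p-I)|$) that the paper states is \emph{not} available for general surfaces of genus $g\geq2$ and that the whole framework is designed to avoid. So that identification would need an independent proof, and as written the argument quietly re-imports the Nielsen-number machinery.

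The second gap you flag yourself: localizing the forced fixed point of $f^p$ inside a region disjoint from $\mathrm{Fix}(f)$. This is not a peripheral technicality — it is the crux, because without it the periodic point could coincide with a fixed point of $f$ and carry minimal period $1$, not $p$. The paper handles it constructively: it perturbs to get finitely many fixed points of $f$ in $S_p$, slices out the intervals $I_i^\epsilon$ or $J_i^\epsilon$ through them, and checks that expansiveness of $f^p_{\mathrm{loc}}$ persists on $S_p\setminus M_m$ so the Section~3 theorem still applies; it then passes to continuous $f$ by the nested $\delta$-square argument. Your proposal asserts that ``the growth $|\ell_p|\to\infty$ \ldots should force the intersection points to spread out enough'' but does not give the excision-and-re-verify-expansiveness step, so the conclusion $\mathrm{card}(\mathrm{MPer}(f))=\infty$ is not yet reached. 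In short: same skeleton, but the existence mechanism is swapped for a Lefschetz-type determinant identity that is neither proved nor available in the paper's stated framework, and the localization — which the paper spends the bulk of Section~4 on — is left open.
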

Before we go into the proof let us first consider the case of a $2$-torus ($g$=1). Here the question which homotopy classes have infinitely many periodic points is answered completely in \cite{ABLSS}. Let us look if the algebraic criterion given above leads to the same result. Indeed, we have
\begin{corollary}
Let $f:{\mathbb T}^2\rightarrow {\mathbb T}^2$ be a map of the torus, and let $M_2\in {\cal M}_{2\times 2}\left({\mathbb Z}\right)$ be the matrix corresponding to the first homology morphism $f_{*1}$. Furthermore, assume that the $t=t(M_2)$ and $d=d(M_2)$ denote the trace and the determinant of the matrix $M_2$. If the pair $(t,d)\in {\mathbb Z}^2$ is {\sc not} on the line
\begin{equation}
-t+d+1=0
\end{equation}
and {\sc not} in the set
\begin{equation}
M_6=\left\lbrace (0,0),~(-1,0),~(-2,1),~(0,1),~(-1,1),~(1,1)\right\rbrace ,
\end{equation}
then 
\begin{equation}
\mbox{card}\left( \mbox{MPer}(f)\right)=\infty. 
\end{equation}
\end{corollary}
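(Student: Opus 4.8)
The plan is to read off the two eigenvalues $\mu_1,\mu_2$ of $M_2$ from the characteristic polynomial $p(\lambda)=\lambda^2-t\lambda+d$ and then match them against the trichotomy ``hyperbolic / expansive / neither'' that governs the Theorem; the eigenvalues are exactly the diagonal entries $d_1,d_2$ of the Jordan form, and the excluded line $-t+d+1=0$ is precisely the condition $p(1)=0$, i.e. that $+1$ is an eigenvalue. Since the corollary is a one-sided statement I only need: if $(t,d)$ avoids $M_6$ and that line, then $\mbox{card}(\mbox{MPer}(f))=\infty$. First I would dispose of the non-real and the degenerate case. If $p$ has complex conjugate roots then $|\mu_1|=|\mu_2|=\sqrt{d}$ with $d\geq 1$ (because $t^2<4d$); for $d\geq 2$ the pair $(d_1,d_2)$ is expansive and the Theorem applies, while $d=1$ forces $t\in\{-1,0,1\}$, that is $(t,d)\in\{(-1,1),(0,1),(1,1)\}\subseteq M_6$. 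If $t^2=4d$ there is a repeated integer eigenvalue $\lambda=t/2$ with $d=\lambda^2$: for $|\lambda|\geq 2$ the pair is expansive; $|\lambda|=1$ gives $(2,1)$ (on the excluded line) or $(-2,1)\in M_6$; and $\lambda=0$ gives $(0,0)\in M_6$.

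Next I would handle the generic real case $t^2>4d$. If the two roots are irrational conjugate quadratic surds they are in particular different from $0$ and from $\pm1$, so $|d|=|\mu_1\mu_2|\geq 1$ and both moduli differ from $1$; when $|d|=1$ one modulus is $<1$ and the other $>1$ (hyperbolic), and when $|d|\geq 2$ at least one modulus exceeds $1$ while neither equals $1$, so the pair is hyperbolic or expansive. Thus the whole irrational stratum falls under the Theorem. If instead the roots are distinct integers $\mu_1\neq\mu_2$, I would distinguish: if one root is $0$ and the other has modulus $\geq 2$, the pair is hyperbolic; if one root is $+1$ then $p(1)=0$ and $(t,d)$ lies on the excluded line; and if both roots lie in $\{-1,0,1\}$ then, being distinct, $(t,d)\in\{(-1,0),(1,0),(0,-1)\}$, the first in $M_6$ and the other two on the excluded line.

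This leaves exactly one stratum inside the complement of $M_6$ and the excluded line, namely distinct integer eigenvalues $-1$ and $\mu$ with $|\mu|\geq 2$ --- equivalently $(t,d)=(\mu-1,-\mu)$, lying on the line $1+t+d=0$. Here the Jordan pair $(-1,\mu)$ is neither hyperbolic nor expansive, since an eigenvalue of modulus exactly $1$ is invisible to the Theorem's condition; so I would close this case by the Lefschetz computation special to the torus. One has $L(f^n)=(1-(-1)^n)(1-\mu^n)$, which for odd $n$ equals $2(1-\mu^n)$: nonzero and unbounded because $|\mu|\geq 2$, whereas $L(f)=2(1-\mu)$ is fixed. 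By the standard Nielsen-theoretic lower bound for the minimal set of periods of torus maps --- Möbius inversion of the Lefschetz numbers via the identity (\ref{NL}), as in \cite{ABLSS} --- the unboundedness of $|L(f^p)|$ along odd primes $p$ forces all sufficiently large such primes into $\mbox{MPer}(f)$, so $\mbox{card}(\mbox{MPer}(f))=\infty$; concretely, for the linear representative these points of least period $p$ are exhibited by the elementary prime-period argument of the Introduction.

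The step I expect to be the main obstacle is precisely this residual stratum. An eigenvalue equal to $-1$, unlike one equal to $+1$, cannot be cleared by passing to iterates --- every $M_2^k$ retains a $\pm1$ eigenvalue --- so there is no reduction to the hyperbolic/expansive hypothesis of the Theorem, and one is forced back onto the Lefschetz computation special to tori, i.e. essentially onto the result of \cite{ABLSS} itself. The only other delicate point is bookkeeping: one has to check that every boundary configuration (complex eigenvalues of modulus $1$, a repeated eigenvalue, or small integer eigenvalues) lands inside $M_6\cup\{-t+d+1=0\}$ rather than merely close to it, which is the finite verification carried out above.
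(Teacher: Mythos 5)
Your stratification of the $(t,d)$-plane is more careful than the paper's, and it exposes a genuine gap. The paper's proof merely computes the eigenvalue pairs for $(t,d)\in M_6$ and for $(t,d)$ on the line $-t+d+1=0$, notes they each contain an eigenvalue of modulus $\leq 1$, and then closes with the unsupported claim that ``in all other cases we find that the condition of theorem 1 is satisfied.'' You have correctly shown this claim to be false on the residual stratum you isolate: if the eigenvalues are $-1$ and $\mu$ with $|\mu|\geq 2$ --- equivalently $(t,d)=(\mu-1,-\mu)$, lying on the line $1+t+d=0$ rather than on $-t+d+1=0$ --- then $(t,d)$ is excluded by neither $M_6$ nor the paper's line, yet the Jordan diagonal pair $(-1,\mu)$ has $|d_1|=1$ exactly, so it is neither hyperbolic (conditions (\ref{condition2}), (\ref{condition3}) require a strict inequality $|d_j|<1$) nor expansive (condition (\ref{condition1}) requires both moduli strictly greater than $1$). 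The concrete instance $(t,d)=(1,-2)$, eigenvalues $(-1,2)$, makes this explicit, and as you observe this cannot be repaired by passing to iterates, since $(\pm 1)^k$ retains modulus $1$.

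Your repair of this stratum is valid: $L(f^n)=(1-(-1)^n)(1-\mu^n)$, so $|L(f^p)|=2|1-\mu^p|\to\infty$ along odd primes $p$ while $|L(f)|$ is fixed, and on the torus $N(f^n)=|L(f^n)|$ by (\ref{NL}); the standard Nielsen argument (e.g.\ $N(f^p)>N(f)$ forces an essential fixed-point class of $f^p$ that is not a class of $f$, hence a point of least period $p$, and this bound is a homotopy invariant) then places all sufficiently large odd primes in $\mbox{MPer}(f)$. The trade-off is that this step goes through precisely the Nielsen-theoretic machinery of \cite{ABLSS} and \cite{BBPT} that the paper advertises it is trying to circumvent, whereas the rest of your case-check is a clean finite verification that matches the paper's intent. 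In short: your proof is correct, and it does strictly more than the paper's --- it identifies and plugs a hole that the paper's one-line assertion papers over.
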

\begin{proof}
We verify the criterion of theorem 1 above. $M_2$ has the eigenvalues
\begin{equation}
\lambda_1=\frac{t-\sqrt{t^2-4d}}{2},~~\lambda_2=\frac{t+\sqrt{t^2-4d}}{2}
\end{equation}

First we list the pairs of eigenvalues for all pairs $(t,d)\in M_6$. We have
\begin{equation}\label{sixcases}
\begin{array}{ll}
(t,d)=(0,0)\rightarrow (\lambda_1,\lambda_2)=(0,0)\\
\\
(t,d)=(-1,0)\rightarrow (\lambda_1,\lambda_2)=(-1,0)\\
\\
(t,d)=(-2,1)\rightarrow (\lambda_1,\lambda_2)=(-1,-1)\\
\\
(t,d)=(0,1)\rightarrow (\lambda_1,\lambda_2)=(-i,i)\\
\\
(t,d)=(-1,1)\rightarrow (\lambda_1,\lambda_2)=\left( \frac{-1-i\sqrt{3}}{2},\frac{-1+i\sqrt{3}}{2}\right) \\
\\
(t,d)=(1,1)\rightarrow (\lambda_1,\lambda_2)=\left( \frac{1-i\sqrt{3}}{2},\frac{1+i\sqrt{3}}{2}\right).
\end{array}
\end{equation}
Note that except in the case $t=d=0$ in each of the six cases we have at least one eigenvalue of modulus $1$. Hence, the condition of theorem 1 for infinitely many periodic points is violated in each of the six cases (\ref{sixcases}). Furthermore, if $(t,d)$ satisfies $-t+d+1=0$, then we have
\begin{equation}
(\lambda_1,\lambda_2)=(1,d),
\end{equation}
hence the condition of theorem 1 for infinitely many periodic points is violated again. In all other cases we find that the condition of theorem 1 is satisfied. 
\end{proof}

\section{Some fixed point theory from the point of view of real algebraic geometry}

From the point of view of real algebraic geometry Brouwers fixed point theorem can be proved by some basic observations about intersections of algebraic varieties. This point of view leads immediately to an extension of the Brouwerian fixed point theorem which were proved with different methods in \cite{K}. In this paper we only consider the dimension $n=2$. Indeed, in order to prove the existence of a fixed point of a continuous map $f:Q\rightarrow {\mathbb R}^2$, where $Q$ is a square, $Q=\left[0,1\right]^2$, it suffices to consider a sequence of polynomial mappings with restriction to $Q$, i.e. polynomial mappings ${\mathbf p}^n:Q\rightarrow {\mathbb R}^2$, where each ${\mathbf p}^n$ is a pair ${\mathbf p}^n=(p^n_1,p^n_2)$ of polynoms restricted to $Q$ with bivariate polynoms $p^n_i$ (here and in the following we identify bivariate polynoms in the ring ${\mathbb R}\left[X_1,X_2\right]$  with certain maps in the obvious way if this is convenient). For $i\in \left\lbrace 1,2\right\rbrace $ we say that $f=(f_1,f_2)$ is expansive in direction $i$ 
\begin{equation}
f_i(C)\supseteq \left[0,1\right]  
\end{equation}
for any algebraic curve $C\subset Q$ connecting the faces $\left\lbrace 0\right\rbrace\times \left[0,1\right]$ and $\left\lbrace 1\right\rbrace\times \left[0,1\right]$, i.e. $C$ consists of one pathwise connected component and 
\begin{equation}
C\cap \left\lbrace 0\right\rbrace\times \left[0,1\right]\neq \oslash,
\end{equation}
 and
\begin{equation}
C\cap \left\lbrace 1\right\rbrace\times \left[0,1\right]\neq \oslash.
\end{equation}
Furthermore, for $i\in \left\lbrace 1,2\right\rbrace $ we say that $f=(f_1,f_2)$ is conservative in direction $i$ if
\begin{equation}
f_i(Q)\subseteq \left[0,1\right].
 \end{equation}
 We have
 
 \begin{theorem}
 Let $f:Q\rightarrow {\mathbb R}^2$ be a continuous map which is either expansive or conservative in all directions $i\in \left\lbrace 1,2\right\rbrace$. Then $f$ has a fixed points, i.e. we have $f(x)=x$ for some point $x\in Q$.  
 Moreover for a dense class of polynomial mappings at least one fixed point is the intersection of two algebraic curves $C_1,C_2$ on $Q$, where $Q\setminus C_i,~1\leq i\leq 2$ consists of at least two connected components. Moreover, there is a dense set of polynomial mappings with an odd number of fixed points.
 \end{theorem}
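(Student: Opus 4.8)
The plan is to reduce the statement to a one-dimensional intermediate value argument applied direction by direction, in the spirit of a real-algebraic proof of Brouwer's theorem. First I would fix a polynomial map $\mathbf{p}=(p_1,p_2):Q\to\mathbb{R}^2$ approximating $f$ and consider the ``zero set of the displacement in direction $i$'', namely the algebraic curve $C_i:=\{x\in Q: p_i(x)-x_i=0\}$. A fixed point of $\mathbf{p}$ is exactly a point of $C_1\cap C_2$, so the whole theorem becomes: (a) $C_1\cap C_2\neq\varnothing$, (b) for generic $\mathbf{p}$ each $C_i$ separates $Q$ into at least two components, and (c) for generic $\mathbf{p}$ the intersection is transverse and finite, hence of even parity — and then an extra argument upgrades ``even'' to ``odd'' after accounting for the behaviour on the boundary.

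For step (a) I would argue one direction at a time. If $f$ (hence $\mathbf{p}$, after taking the approximation fine enough) is conservative in direction $i$, then $p_i(x)-x_i\geq -x_i$ on the face $\{x_i=0\}$ and $\leq 1-x_i$ on the face $\{x_i=1\}$, so the displacement $g_i(x):=p_i(x)-x_i$ is $\geq 0$ on one face and $\leq 0$ on the opposite face; if $f$ is expansive in direction $i$, then the definition $f_i(C)\supseteq[0,1]$ for every curve $C$ crossing in the $i$-direction forces $g_i$ to take both signs along any such crossing curve. In either case one gets, by the intermediate value theorem applied along suitable segments (or along the transversal curves produced by the other direction), a connected piece of the hypersurface $\{g_i=0\}$ that runs from one face of $Q$ to the opposite one — i.e. a curve $C_i$ that is itself a ``crossing curve'' in the perpendicular sense. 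Feeding the crossing curve $C_1$ into the direction-$2$ condition (and vice versa) then yields a point where $g_2$ changes sign along $C_1$, which is a common zero: $C_1\cap C_2\neq\varnothing$. This is precisely where the two hypotheses interlock, and it is the step I expect to be the main obstacle, because it requires knowing that $\{g_i=0\}$ really contains a single connected arc joining the two faces rather than, say, a collection of closed ovals — this is exactly the role of the genericity/density hypothesis and of a Bezout- or Harnack-type count controlling the topology of a real algebraic curve in a box.

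For step (b), once $C_i$ contains an arc from $\{x_i=0\}$ to $\{x_i=1\}$, removing that arc from the square $Q$ disconnects it (Jordan-curve / separation argument for an arc with endpoints on the boundary of a disk), giving at least two components, so the ``moreover'' clauses about $Q\setminus C_i$ are immediate. For step (c) and the final odd-number assertion, I would invoke that for a dense (Zariski-open within each degree) set of polynomial pairs the curves $C_1,C_2$ are smooth and meet transversally, so $C_1\cap C_2$ is a finite set; counting signs of the Jacobian of $\mathbf{p}-\mathrm{id}$ at these points, the signed count equals the Brouwer degree of $\mathbf{p}-\mathrm{id}$ on $\partial Q$ relative to $0$, which under the conservative/expansive boundary behaviour is $\pm1$ (odd), hence the total number of intersection points is odd. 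I would phrase this via a mod-$2$ degree / parity-of-intersection-number argument in the De Rham framework the paper later uses, so that the density statement is exactly the transversality genericity and the ``odd'' statement is the nonvanishing of the boundary degree. The continuous case then follows by taking $\mathbf{p}^n\to f$ uniformly on $Q$ and extracting a convergent subsequence of fixed points, using compactness of $Q$ and continuity of $f$; the fixed-point set of $f$ is closed and nonempty, though of course the parity statement survives only for the approximating polynomial maps, which is all that is claimed.
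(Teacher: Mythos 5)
Your proposal follows essentially the same plan as the paper: approximate $f$ by a polynomial map $\mathbf{p}$, form the displacement curves $C_i=\{p_i(x)-x_i=0\}$, argue that each $C_i$ contains a connected arc crossing the square between opposite faces, take a fixed point from $C_1\cap C_2$, and finally pass to a uniform polynomial limit using compactness of $Q$. The separation-of-the-square observation (your step (b)) and the limit argument appear verbatim in the paper's proof, and the curves you write down are exactly the paper's $C_1, C_2$.

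The two places where you diverge are worth noting. First, for the odd count, the paper does not use the boundary degree of $\mathbf{p}-\mathrm{id}$; instead it perturbs so that the univariate polynomial $p_1^{n,\epsilon}(0,Y)$ has only simple zeros and reads off odd parity from that, together with a real-algebraic lemma stating that at every point of a real algebraic curve the number of half-branches is even (so that no component of $C_i$ can terminate in the interior of $Q$). Your Brouwer-degree / signed-Jacobian argument is a perfectly good and arguably cleaner way to get the same conclusion, and it cleanly explains \emph{why} the count is odd rather than just even; the paper's route is more elementary and stays entirely within real algebraic geometry, which is the theme of the section. Second, the paper's written proof actually treats only the conservative case ($f:Q\to Q$); the expansive case is not spelled out there. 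You give a plausible sketch for it (sign change of $g_i$ forced by $f_i(C)\supseteq[0,1]$ along any crossing curve), which is a genuine addition, though as you yourself flag, establishing that $\{g_i=0\}$ contains a \emph{connected} crossing arc rather than only ovals is the delicate point; the paper handles this with the even-half-branches lemma (a component with no endpoints on $\partial Q$ would have to be a closed oval, and a separation argument between the two opposite faces excludes the all-ovals scenario), which is in effect the same resolution you anticipated needing via a Harnack/Bezout-type control. In short: same skeleton, a cleaner mechanism on your side for the odd count, and on the paper's side a half-branch parity lemma where you invoked genericity and topology of real curves.
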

 The latter theorem is an extension of the Brouwerian fixed point theory by topological conjugacy, i.e. we may assume (by topological conjugacy) that $K$  is the unit square $Q$, where $Q=[0,1]\times [0,1]$. We have
 \begin{corollary}
  Let $g : K\rightarrow K$ be a continuous function where $K\subset {\mathbb R}^2$ is homeomorphic to a ball in ${\mathbb R}^2$. Then $f$ has a fixed point. Moreover, there is a dense class of polynomial mappings in $C(K,K)$ (with respect to the standard topology) with an odd number of fixed points in $K$. 
 \end{corollary}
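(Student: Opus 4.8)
The plan is to derive this corollary from Theorem~2 by topological conjugacy, exactly as announced in the sentence preceding its statement. First I would fix a homeomorphism $\phi\colon Q\to K$, which exists because the closed unit square $Q=[0,1]\times[0,1]$ and the set $K$ are both homeomorphic to a closed ball in ${\mathbb R}^2$. Setting $\tilde g:=\phi^{-1}\circ g\circ\phi\colon Q\to Q$, this map is continuous, and since $g(K)\subseteq K$ forces $\tilde g(Q)\subseteq Q$, we get $\tilde g_i(Q)\subseteq[0,1]$ for $i\in\{1,2\}$; in other words $\tilde g$ is conservative in both directions, so the hypotheses of Theorem~2 are satisfied. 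Theorem~2 then produces a point $x_0\in Q$ with $\tilde g(x_0)=x_0$, whence $g(\phi(x_0))=\phi(\tilde g(x_0))=\phi(x_0)$, i.e. $\phi(x_0)$ is a fixed point of $g$. More generally the assignment $x\mapsto\phi(x)$ is a bijection between $\mathrm{Fix}(\tilde g)$ and $\mathrm{Fix}(g)$, so these two sets have the same cardinality.

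For the density statement I would observe that conjugation by $\phi$ is a homeomorphism of mapping spaces. Indeed $\Phi\colon C(Q,Q)\to C(K,K)$, $\Phi(h):=\phi\circ h\circ\phi^{-1}$, is a bijection with inverse $h\mapsto\phi^{-1}\circ h\circ\phi$, and since $\phi$ and $\phi^{-1}$ are uniformly continuous on the compact sets $Q$ and $K$, both $\Phi$ and $\Phi^{-1}$ are continuous for the topology of uniform convergence (equivalently the compact--open topology, which is the standard topology on these spaces). Hence $\Phi$ carries dense subsets of $C(Q,Q)$ to dense subsets of $C(K,K)$. Applying the last clause of Theorem~2 to the conservative map $\tilde g$ gives a dense set $\mathcal P\subset C(Q,Q)$ of polynomial self-maps of $Q$, each of which has an odd number of fixed points; then $\Phi(\mathcal P)$ is dense in $C(K,K)$ and, by the bijective correspondence of fixed points from the previous paragraph, every element of $\Phi(\mathcal P)$ has an odd number of fixed points in $K$.

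The subtle point — and the step I expect to be the real obstacle — is the literal word ``polynomial'' in the conclusion: for a general topological ball $K$ the conjugate $\phi\circ p\circ\phi^{-1}$ of a polynomial map $p$ need not be polynomial, so strictly speaking the argument above only yields a dense family of maps each \emph{conjugate to} a polynomial map (which is the natural reading here, since $K$ itself is given only up to homeomorphism). To obtain honestly polynomial representatives one has to assume in addition that $K$ is a semialgebraic ball and $\phi$ a semialgebraic homeomorphism; then I would first approximate $\Phi(p)=\phi\circ p\circ\phi^{-1}$ uniformly by a genuine polynomial self-map of $K$ — using Stone--Weierstrass componentwise together with a polynomial-type retraction of a neighbourhood of $K$ onto $K$ — and then, as in the proof of Theorem~2, perturb this polynomial map by an arbitrarily small polynomial amount so that the defining algebraic curves meet transversally, which keeps the fixed-point set finite and of unchanged parity. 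Controlling this last approximation so that the odd parity is preserved is the only genuinely nontrivial ingredient; the conjugation argument and the existence of a fixed point are immediate from Theorem~2.
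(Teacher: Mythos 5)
Your proposal takes exactly the route the paper takes: reduce to $Q=[0,1]^2$ by topological conjugacy (the sentence preceding the corollary announces this) and invoke Theorem~2; the proof block that follows the corollary in the paper is really the proof of Theorem~2 in the $Q$-case. You flesh out the conjugation step more carefully than the paper does, verifying that $\Phi(h)=\phi\circ h\circ\phi^{-1}$ is a homeomorphism of $C(Q,Q)$ onto $C(K,K)$ carrying dense subsets to dense subsets and matching fixed-point sets bijectively, which is a genuine improvement over the paper's one-line appeal to conjugacy.

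Your caveat about the literal word ``polynomial'' is a genuine catch. For an arbitrary topological ball $K$, the conjugate $\phi\circ p\circ\phi^{-1}$ of a polynomial map need not be polynomial, and it is not even clear that polynomial self-maps of $K$ form a dense subset of $C(K,K)$ when $K$ has irregular boundary. The paper does not address this: it states the reduction to $Q$ in one sentence and then proves only the $Q$-case, leaving a gap between the $Q$-statement it actually proves and the $K$-statement of the corollary as written. Your proposed repair --- take $K$ and $\phi$ semialgebraic, re-approximate $\phi\circ p\circ\phi^{-1}$ by a genuine polynomial self-map of $K$, then perturb slightly so that the two algebraic curves $Z(p_1-\mathrm{id}_1)$ and $Z(p_2-\mathrm{id}_2)$ meet transversally while keeping the fixed-point count of unchanged parity --- is the natural way to make the claim honest, and the parity-preservation step you single out is indeed where the nontrivial work would lie. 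The paper does not carry out any version of this step.
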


\begin{proof}
 Let $f=(f_1,f_2):Q\rightarrow Q$ be a continuous  map. Each function $f_i:Q\rightarrow [0,1]$ has a polynomial approximation $p^{n,\epsilon}_i:Q\rightarrow [0,1]$ such that
\begin{equation}
p^{n,\epsilon}_i(x,0)\neq 0,~~p^{n,\epsilon}_i(x,1)\neq 1~~\mbox{for}~~x\in [0,1],
\end{equation}
and
\begin{equation}
\sup_{z\in Q}|f(z)-p^{n,\epsilon}_i(z)|\leq \epsilon .
\end{equation}
Next w.l.o.g. we may assume that the univariate polynomial $p^{n,\epsilon}_1(0,Y)$ has only simple zeros. Clearly the parity of the number of simple zeros is odd. Now the equation $p^{n,\epsilon}_1(X,Y)-X=0$ restricted to $Q$ defines an algebraic curve $C_1$ on $Q$. Since the number of half-branches at each point $z\in C_1$ is even, and the intersection with the sets $\left\lbrace z=(z_1,z_2)\in Q|z_1=0\right\rbrace$, and $\left\lbrace z=(z_1,z_2)\in Q|z_1=1\right\rbrace$ is the empty set, there is at least one connected component $K\subseteq C_1$ which meets opposite faces of the square, i.e. $K_1\cap \left\lbrace z=(z_1,z_2)\in Q|z_2=0\right\rbrace\neq \oslash$ and $K_1\cap \left\lbrace z=(z_1,z_2)\in Q|z_2=1\right\rbrace\neq \oslash$.  Similarly, the equation $p^{n,\epsilon}_2(X,Y)-Y=0$ restricted to $Q$ defines an algebraic curve $C_2$ on $Q$. Again, there is at least one connected component $K_2\subseteq C_2$ which meets opposite faces of the square, i.e. $K_2\cap \left\lbrace z=(z_1,z_2)\in Q|z_1=0\right\rbrace\neq \oslash$ and $K_2\cap \left\lbrace z=(z_1,z_2)\in Q|z_1=1\right\rbrace\neq \oslash$. Note that every point of $K_1\cap K_2$ is a fixed point. 
Next we may use some real algebraic geometry.
\begin{lemma}
Let $C\subset {\mathbb R}^n$ be a real algebraic curve and let $x$ be a point with $z\in C$. Then the number of half-branches of $C$ centered at $z$ is even. 
\end{lemma}

\begin{proof} If $x$ is nonsingular, then the number of half-branches of $x$ is equal to two (because then in a neighborhood of $z$ $C$ is Nash diffeomorphic to an open interval of ${\mathbb R}$. On the other hand if $z=(x,y)$ is a singular point of $C$, then let $q(X,Y)=0$ be an equation of $C$ without multiple factor. W.l.o.g. we may assume that for each $a\in [0,1]$ on the line $X=a$ $q$ is monic with respect to $Y$. Furthermore, we may assume w.l.o.g. that $z$ is the only singular point on the line $X=z_1$, where $z=(z_1,z_2)$ (otherwise change coordinates appropriately). Next we may choose $\epsilon >0$ such that the discriminant of $q$ with respect to $Y$ has no zero in the open intervals $(z_1-\epsilon , z_1)$ and $(z_1 , z_1+\epsilon)$. The real roots of $q(x,Y)$ are given by the Nash functions $\alpha_1<\cdots <\alpha_k$ for the Nash functions $\alpha_i,~1\leq i\leq k$ for $x\in (z_1-\epsilon , z_1)$ and by $\beta_i,~1\leq i\leq l$ for  $x\in (z_1 , z_1+\epsilon)$, with limit $z$ at $z_1$. Since the number $k$ and $l$ have the same parity as $q$ with respect to $Y$, we conclude that $k+l$ is even.
\end{proof}

 If the sequence ${\mathbf p}^n$ converges to the map $f$ with respect to the supremum norm (well consider convolution with the heat kernel and cut off or consider some related construction in the context of the Weierstrass approximation theorem), then it suffices to prove existence of fixed points for any polynomial mapping ${\mathbf p}^n$. Indeed, a proof leads to a sequence of fixed points $x_n$ with ${\mathbf p}^n(x_n)=x_n$, and since $(x_n)\subset Q$ with $Q$ compact there is a subsequence $(x_m)$ which has a limit $x$. Since
\begin{equation}\label{fixed}
f(x)-x\leq |f(x)-{\mathbf p}^n(x)|+|{\mathbf p}^n(x)-{\mathbf p}^n(x_m)|+|x_m-x|,
\end{equation}
we get the fixed point $x$ for $f$ as we take first the limit with respect to $m$ and then with respect to $n$ on the right side of (\ref{fixed}).
\end{proof}

\section{Proof of theorem 1}

We shall prove the theorem first for local polynomial mapping approximations and then show that the properties claimed are preserved in the limit. 
More precisely, for each prime number $p$ we shall construct a local polynomial mapping approximation (of a suitable topological conjugate) of $f^p$, i.e. of the $p$th iteration of the map $f$. If the number $p$ is large enough, then we identify a fixed point free set $S_p\subset X$, i.e. $f(x)\neq x$ for any $x\in S_p$ where a fixed point of the approximation of the map $f^p$ is constructed as an intersection of two real algebraic varieties. Here we use that the algebraic varieties contain images of loops defining a first order homology class  in $H_1X$ and use elementary intersection theory:
given a $C^{\infty}$ model of $X$ and any $\gamma\in H_1X$ recall that there is a unique class $\omega_{\gamma}\in H^1X$ in the cohomology class $H^1X$ such that
\begin{equation}
\int_{\gamma}\delta =\int_X \omega_{\gamma}\wedge \delta 
\end{equation}
for all $\delta\in H^1X$. Then for any $\sigma,\tau \in H_1X$ define the intersection number $\left\langle \sigma,\tau\right\rangle $ to be
\begin{equation}
\left\langle \sigma,\tau\right\rangle =\int_X\omega_{\sigma}\wedge \omega_{\tau}.
\end{equation}
This number is an integer, but what we really need is the well-known fact that with the basis $(a_i,b_i),~1\leq i\leq g$ for $H_1X$ we have
\begin{equation}
\left\langle a_i,b_j\right\rangle :=\left\lbrace 
\begin{array}{ll}
1,~~\mbox{if}~i=j\\
\\
0,~~\mbox{if}~i\neq j .
\end{array}\right.
\end{equation}
It is well-known that the intersection number is a homotopy invariant which makes it useful in our conext. 

Next we consider polynomial mappings which have fixed points which are intersections of algebraic curves with intersection number $1$. First let us identify the pair $\left( \lambda_i,\lambda_{i+1}\right)$ satisfying the assumptions of theorem $1$. The members $\lambda_i,\lambda_{i+1}$ of the pair belong either to the same Jordan block $J_l$ for some positive integer $l$ or to different Jordan blocks $J_l$ and $J_{l+1}$. If they belong to the same Jordan block we have $\lambda_i=\lambda_{i+1}$ for some positive integer $i$. In that case the assumptions of theorem 1 require that a) $|\lambda_i|=|\lambda_{i+1}|>1$. If they are in different Jordan blocks, then the assumptions of theorem 1 require that one of the three situations b) $|\lambda_i|,|\lambda_{i+1}|>1$, or c) $|\lambda_i|<1~\&~|\lambda_{i+1}|>1$, or d) $|\lambda_i|>1~\&~|\lambda_{i+1}|<1$. In the cases a) and b) we say that the map $f$ is expansive with respect to the basis elements $a_m,b_m$ of the first homology group $H_1X$, where $i=2m+1$ along with some $m\in \left\lbrace 0,\cdots ,g-1\right\rbrace $. In the cases c) and d) we say that the map $f$ is hyperbolic with respect to the basis elements  $a_m,b_m$. Assume that a) is true, i.e. that $|\lambda_i|=|\lambda_{i+1}|>1$. Then $\lambda_i,\lambda_{i+1}$ with $\lambda_i=\lambda_{i+1}$ are elements of the diagonal of one Jordan block $J_r$ for some $1\leq r\leq k$ (if there are $k$ Jordan blocks in the normal form over ${\mathbb C}$).
From (\ref{Jordan}) we get
\begin{equation}\label{iterate}  S J^l  S^{-1}= S\begin{pmatrix} J_1^l & & 0 \\ & \ddots & \\ 0 & & J_k^l \end{pmatrix}^l  S^{-1}= M_{2g}^l,
\end{equation}
W.l.o.g. we may assume that $i=1$ and $i+1=2$, with the corresponding images of the loops $a_1,b_1$ of the basis of $H_1X$. Jordan normal form theory tells us that we may assume that the first two diagonal elements of $S^{-1}=\left(s^{ij}\right) $ are not zero, i.e. $s^{11}\neq 0$ and $s^{22}\neq 0$. Then for the unit vectors $e_1=(1,0,\cdots ,0)$ $e_2=(0,1,\cdots ,0)$ we get
\begin{equation}\label{iterMg1}
|P_1M_{2g}^me_1|\uparrow \infty~\mbox{as}~m\uparrow \infty
\end{equation} 
and 
\begin{equation}\label{iterMg2}
|P_2M_{2g}^me_2|\uparrow \infty~\mbox{as}~m\uparrow \infty ,
\end{equation}
where $P_1$ (resp. $P_2$) denotes the projection on the first (resp. second) component. Here $e_1$ and $e_2$ correspond to the loops $a_1,b_1$ or to the images of the loops which belong to the basis which generates the first homology group $H_1X$. In case b) we have an analogous situation where (\ref{iterMg2}) holds. In cases c) and d) we have that
\begin{equation}\label{iterMgcd1}
|P_1M_{2g}^me_1|\uparrow \infty~\mbox{as}~m\uparrow \infty,
\end{equation} 
or
\begin{equation}\label{iterMgcd2}
|P_2M_{2g}^me_2|\uparrow \infty~\mbox{as}~m\uparrow \infty 
\end{equation}
holds, but not both in general.
Next consider a homeomorphism $\phi :~{\mathbb R}^2\rightarrow {\mathbb R}^2$ such that
\begin{equation}
 \phi\left(\Pi_{4g}\right)=Q\cup \Pi^*_{4g-4},
\end{equation}
where $Q=[0,1]^2$, and $\Pi^*_{4g-4}\subset {\mathbb R}^2$ is a $4g-4$ sided polygonal such that 
\begin{equation}
 Q\cap \Pi^*_{4g-4} =\left\lbrace 1\right\rbrace \times \left[ \frac{1}{2},1 
  \right] 
\end{equation}
Note that a surface homeomorph to $X$ can be obtained by identifcations of sides of  $\phi\left(\Pi_{4g}\right)$ according to the rules induced by the identifcations of $\Pi_{4g}$ via the homeomorphism $\phi$. Especially, the sides $\left\lbrace 0\right\rbrace \times [0,1] $ and $\left[ \frac{1}{2},1 
  \right]$ are identified in such a procedure.

Now consider a family of paths $c_{x_0}^{i,x_0+(0,1)}:[0,1]\rightarrow Q,ĩ\in I$ for some index set $I$ which connect $x_0=(x_{01},x_{02})$ with $x_0+(0,1)=(x_{01},x_{02}+1)$ in $Q$ and a family of paths $c_{(1,0)}^{i,(1,0.5)}:[0,1]\rightarrow Q,ĩ\in I$ for some index set $J$ which connect $x_0=(0,1)$ with $(1,0.5)$ in $Q$. 

These paths correspond to loops on the surface $X$ by the indicated identification of paths, say to loops homotop to $a_1$ w.l.o.g.. Now let ${\mathbf f}^p_{\tiny \mbox{loc}}=\left(f^p_{\tiny \mbox{loc},1},f^p_{\tiny \mbox{loc},2} \right):A_p\rightarrow Q$ be a local representation of the map ${\mathbf f}^p$ on the polygon ($A_p$ maximal).

As the degree of maps $g^p$ defined on images of loops homotop to $a_1$ and induced by ${\mathbf f}^p$ increases with the prime number $p$ analog as in the fixed point theory above we find an algebraic curve $\Gamma_{1p} 
\subseteq \left( f^p_{\tiny \mbox{loc},2} \right)^{-1}(1)$ (a subset of $A_p$) which connects $\left\lbrace 1\right\rbrace \times \left[ 0,\frac{1}{2} 
  \right]$ and $\left\lbrace 0\right\rbrace \times \left[ 0,1 
  \right]$. For $p$ large enough we also find an algebraic curve  
$\Gamma_{2p}\subseteq \left( f^p_{\tiny \mbox{loc},2} \right)^{-1}(0)$ in $A_p$ which connects $\left\lbrace 1\right\rbrace \times \left[ 0,\frac{1}{2} 
  \right]$ and $\left\lbrace 0\right\rbrace \times \left[ 0,1 
  \right]$. Since $f_{\tiny \mbox{loc},2}$ is a polynomial mapping we know that $f^p_{\tiny \mbox{loc},2}$ are polynomial mappings. Furthermore note that these curves are constructed via Nash functions at branching points analogous to the construction above of fixed points as intersections of real algebraic curves above. Removing brances if necessary we may assume that both algebraic curves $\Gamma_{0p}$ and $\Gamma_{1p}$ are homeomorph to the interval. Furthermore we assume that both have exactly one point of intersection with the two opposite faces of a cube connected by the curves, i.e. we assume that there exists exactly one point $\gamma_{00p}$ with
  \begin{equation}
  \gamma_{00p}\in \Gamma_{0p}\cap \left\lbrace 0\right\rbrace \times \left[0,1 \right], 
  \end{equation}
and one point $\gamma_{01p}$ with
\begin{equation}
  \gamma_{01p}\in \Gamma_{1p}\cap \left\lbrace 0\right\rbrace \times \left[0,1 \right], 
  \end{equation}
and exactly one point $\gamma_{11p}$ with
  \begin{equation}
  \gamma_{11p}\in \Gamma_{1p}\cap \left\lbrace 1\right\rbrace \times \left[0,1 \right], 
  \end{equation}
and one point $\gamma_{12p}$ with
\begin{equation}
  \gamma_{12p}\in \Gamma_{1p}\cap \left\lbrace 1\right\rbrace \times \left[0,1 \right], 
  \end{equation}
Next consider the set 
 \begin{equation}
 C_{\Gamma}:= \Gamma_{0p}\cup \left\lbrace 0\right\rbrace \times \left[ \gamma_{00p},\gamma_{01p}\right] \cup \Gamma_{1p} \cup  \left\lbrace 1\right\rbrace \times \left[ \gamma_{11p},\gamma_{12p}\right]
\end{equation}
This set is homeomorphic to a circle and according to the Jordan curve theorem it determines different connected components of $Q$. We may assume that 
\begin{equation}
\left[0,1\right]\times \left\lbrace 0\right\rbrace \cap \left( \Gamma_{0p}\cup \Gamma_{1p}\right) =\oslash ,
\end{equation}
and that
\begin{equation}
\left[0,1\right]\times \left\lbrace 1\right\rbrace \cap \left( \Gamma_{0p}\cup \Gamma_{1p}\right) =\oslash .
\end{equation}
Well, according to our construction the set $Q\setminus C_{\gamma}$ consists of three connected components. Let $x_0\in Q\setminus C_{\gamma}$ be a point which is not in the same connected component as points in the set $\left[0,1\right]\times \left\lbrace 0\right\rbrace\cup \left[0,1\right]\times \left\lbrace 1\right\rbrace$. Let $S_p$ be the closure of the connected which contains the point $x_0$. From our construction it is clear that $S$ is a semi-algebraic set, where we recall
\begin{definition}
A semi-algebraic subset of ${\mathbb R}^n$ is a subset of the form
\begin{equation}
 \bigcup_{l=1}^r\bigcap_{k=1}^s \left\lbrace x\in {\mathbb R}^n|p_{k,l}\ast_{k,l}0\right\rbrace ,
\end{equation}
where $p_{k,l}\in {\mathbb R}\left[X_1,\cdots ,X_n\right] $ and $\ast_{k,l}\in \left\lbrace <,=\right\rbrace $ for $1\leq k\leq r$ and $1\leq l\leq s$.
\end{definition}
Next we observe that the set $S_p$ constructed above is what we call an expansive (case a) and b)) or hyperbolic (case c) and d)) semi-algebraic set. First note that the set $S_p$ as constructed above is a subset of a semi-algebraic set of form
\begin{equation}
A_p:=Q\setminus \left\lbrace p_0>0\right\rbrace \subset Q\setminus \left\lbrace x|\mbox{dist}_E\left(x,\left\lbrace 1 \right\rbrace \times \left[0.5,1\right]\leq \epsilon  \right) \right\rbrace 
\end{equation}
for some small $\epsilon >0$. Next, in the cases a) and b) we construct a fixed-point free expansive semi-algebraic sets for $f^p$ ($p$ large enough). Using infinitesimal perturbation if necessary we may assume that $f$ has only finitely many fixed points in $S_p$, say there are $m$.

Hence, let 
\begin{equation}
\mbox{Fix}\left( f\right) \cap S_p =\left\lbrace x_1,\cdots ,x_m\right\rbrace 
\end{equation}
where each fixed point is given by a pair of numbers, i.e. $x_i=\left(x_{i1},x_{i2}\right) ,1 \leq i\leq m$. Applying an additional perturbation we may assume that for all $1\leq i\leq m$ we have
\begin{equation}\label{firstco}
x_{i1}\in (0,1).
\end{equation}

\begin{remark}
A canonical way of doing this is adding an infinitesimal and working in a topos which is typical in synthetic differential geometry and then project back to the classical topos; however, it is clear that it would also be suffcient to consider small perturbations in a classical topos. 
\end{remark}

For each $i,~1\leq i\leq m$ consider the intervals
\begin{equation}
I_i:=\left\lbrace x_{i1}\right\rbrace \times \left[0,x_{i2}\right]  
\end{equation}
and
\begin{equation}
J_i:=\left\lbrace x_{i1}\right\rbrace \times \left[x_{i2},1\right]  
\end{equation}
Given the fixed point coordinates as in (\ref{firstco}) we may assume that $\epsilon$ is chosen small enough such that $I_i\subseteq A_p$ and $J_i\subseteq A_p$. Consider the interval $I_i\cup J_i$ in the set $\phi\left(\Pi_{4g}\right)$. It becomes the image of a loop homotop to the image of $a_1$ in $X$ under identification of sides. Note that the degree of the map induced by iterates $f^p$ on such an image of a loop increases as $p$ increases without limit. This implies that the map ${\mathbf f}^p_{\tiny \mbox{loc}}=\left(f^p_{\tiny \mbox{loc},1},f^p_{\tiny \mbox{loc},2} \right)$ is expansive on $S_p$ in cases a) and b), and expansive in one direction in conservative in the other direction in cases c) and d). Even more holds. In cases a) and b) given $1\leq i\leq m$ the map ${\mathbf f}^p_{\tiny \mbox{loc}}$ is even expansive on $S_p\setminus I_i$ or expansive on $S_p\setminus J_i$ (at least on one of the two subsets of $S_p$). Moreover for any $\epsilon>0$ we find a neighborhood of size $\epsilon_i$ of $I_i$, i.e. a set $B_{\epsilon}\left(I_i\right):=\left\lbrace x|\mbox{dist}_E\left(x, I_i\right)< \epsilon  \right\rbrace $ which contains a semia-lgebraic set $I^{\epsilon}_i\supset I_i$. Increasing $p$ if necessary we first have that he map ${\mathbf f}^p_{\tiny \mbox{loc}}$ is expansive on the semi-algebraic set $S_p\setminus I^{\epsilon}_i$. Then we get a set $M_m$ which is the union of $m$ semi-algebraic sets ($I^{\epsilon}_i$ or $J^{\epsilon}_i$) such that the map ${\mathbf f}^p_{\tiny \mbox{loc}}$ is expansive on $S_p\setminus M_m$. In cases c) and d) we find a set $L_m$ which is the union of $m$ semi-algebraic sets ($I^{\epsilon}_i$ or $J^{\epsilon}_i$) such that the map ${\mathbf f}^p_{\tiny \mbox{loc}}$ is expansive on $S_p\setminus L_m$ in one direction and conservative in the other direction. Hence in cases a) and b) we have constructed  a fixed-point free expansive semi-algebraic set $S_p\setminus M_m$ for $f^p$ ($p$ large enough) and in cases c) and d) we have constructed what may be called a fixed-point free hyperbolic semi-algebraic set $S_p\setminus M_m$ for $f^p$ ($p$ large enough).
This implies that the map ${\mathbf f}^p_{\tiny \mbox{loc}}=\left(f^p_{\tiny \mbox{loc},1},f^p_{\tiny \mbox{loc},2} \right)$ has a fixed point for any prime number beyond a certain threshold (according to our fixed point result in section 3). Since $S_p$ is constructed fixed-point free for any $p$ the prime number $p$ is the minimal period according to our considerations in the introduction.  
  
Having proved the theorem for polynomial mappings we finally extend to continuous functions completing the proof. Recall that we have constructed fixed points as intersections of algebraic curves which represent the image of loops on the surface such that the corresponding $1$-cohomology intersection number is $1$. Note that we have to establish the preservation of fixed points of certain iterates of functions on a fixed point free set, i.e. we have to ensure that in the limit the iterate $f^p$ has its fixed point on some fixed point free set ($\mbox{Fix}\left(f\right)=\oslash$). Consider again the localized map ${\mathbf f}^p_{\tiny \mbox{loc}}=\left(f^p_{\tiny \mbox{loc},1},f^p_{\tiny \mbox{loc},2} \right) :A_{\epsilon}\rightarrow {\mathbb R}^2$ on the semi-algebraic set $A_{p}\subset [0,1]\times [0,1]$ constructed as indicated above. Well, the sets $Z\left( f^p_{\tiny \mbox{loc},1}-\mbox{id}_1\right) $ and $Z\left( f^p_{\tiny \mbox{loc},2}-\mbox{id}_2\right)$ may look quite complicated. Especially, they may not contain algebraic curves connecting opposite faces of a cube of dimension $2$. Well, it may be possible to substitute the term 'algebraic curve' by something more general and try to extend intersection theory to this more generalized objects (but this is certainly not trivial, since the objects may be quite complicated from a topological point of view). However, there is a simpler way to get the fixed point property on a fixed point free set in the limit. For each $\epsilon >0$ we can construct a map ${\mathbf f}^p_{\epsilon, \tiny \mbox{loc}}=\left(f^p_{\epsilon ,\tiny \mbox{loc},1},f^p_{\epsilon ,\tiny \mbox{loc},2} \right) :A_{p}\rightarrow {\mathbb R}^2$ such that
\begin{equation}\label{appr}
|f^p_{\epsilon ,\tiny \mbox{loc},i}-f^p_{\tiny \mbox{loc},i}|\leq \epsilon
\end{equation}
for $i\in \left\lbrace 1,2 \right\rbrace$, and such that 
for some  $\delta >0$ we have
\begin{equation}
Z_{i,p,\delta}\subseteq Z\left( f^p_{\epsilon ,\tiny \mbox{loc},i}-\mbox{id}_i\right)
\end{equation}
where
\begin{equation}
Z_{i,p,\delta}= \bigcup\left\lbrace q_\delta \in Q_{\delta} |\exists x \in q_{\delta}: x\in 
Z\left( f^p_{\tiny \mbox{loc},i}-\mbox{id}_i\right) \right\rbrace . 
\end{equation}
Here, the $q_{\delta}$ are small $\delta\times \delta$-squares, which are elements of 
\begin{equation}
Q_{\delta}:=\left\lbrace q_{\delta}\subset \left[0,1\right]^2 |x\in q_{\delta}\Leftrightarrow \exists g_{\delta}\in G_{\delta}:~x=g_{\delta}+q\right\rbrace,
\end{equation}
where 
\begin{equation}
G_{\delta}:=\left\lbrace g_{\delta}=(g_{\delta 1},g_{\delta 2})|\exists~m,n\in {\mathbb N}: m\delta =g_{\delta 1}~\&~n\delta =g_{\delta 2}\right\rbrace ,
\end{equation}
\begin{equation}
q=\left\lbrace x=(x_1,x_2)|0\leq x_1\leq \delta~\&~0\leq x_2\leq \delta \right\rbrace ,
\end{equation}
and
\begin{equation}
g_{\delta}+q=\left\lbrace x| x=g_{\delta}+y~\mbox{ for some }~y\in q\right\rbrace . 
\end{equation}
Moreover, since the set $A$ is closed and fixed point free $\epsilon$ and $\delta$ above can be chosen small enough such that ${\mathbf f}^p_{\epsilon, \tiny \mbox{loc}}$ is fixed point free. For each $\epsilon >0$ and $\delta >0$ there is at least one 'fixed point square' of ${\mathbf f}^p_{\epsilon, \tiny \mbox{loc}}$, i.e. there is at least one $q_{\delta}\in Q_{\delta}$ such that
\begin{equation}\label{fixsquare}
q_{\delta}\cap_{i\in \left\lbrace 1,2\right\rbrace }Z\left( f^{p}_{\epsilon ,{\tiny \mbox{loc}},i}-\mbox{id}_i\right).
\end{equation}
Here we argue that there is a local polynomial mapping approximation such that (\ref{appr}) is satisfied for the polynomial mapping approximation, say for $\epsilon/2$. This polynomial mapping approximation of ${\mathbf f}^p_{\epsilon, \tiny \mbox{loc}}$ then may be assumed to have a fixed point constructed as the intersection of two real algebraic curves by the argument above. We may then construct the mapping ${\mathbf f}^p_{\epsilon, \tiny \mbox{loc}}$ from the latter polynomial mapping approximation such that (\ref{fixsquare}) is satisfied.
Hence, for a given prime number $p$ as $\epsilon\downarrow 0$ and $\delta \downarrow 0$ we get sequences $\epsilon_m,\delta_m$ converging to zero and a sequence of nested $\delta$-fixed point squares
\begin{equation}\label{nested}
q_{\delta_1}\supset q_{\delta_2} \supset q_{\delta_2}\cdots 
\end{equation}
where
\begin{equation}
q_{\delta_m}\cap_{i\in \left\lbrace 1,2\right\rbrace }Z\left( f^{p}_{\epsilon_m ,{\tiny \mbox{loc}},i}-\mbox{id}_i\right).
\end{equation}
As $\epsilon\downarrow 0$ we may construct ${\mathbf f}^p_{\epsilon, \tiny \mbox{loc}}$ uniformly converging to ${\mathbf f}^p$, i.e. with respect to the supremum norm. Then
\begin{equation}
x_f\in \bigcap_{m\in {\mathbb N}} q_{\delta_m}\neq \oslash
\end{equation}
(by completeness of ${\mathbb R}$) is a fixed point of $f^p$ and a periodic point of minimal period $p$ of $f$ (because $A$ is fixed point free). Since the prime number was arbitrary (except being required to be large enough, i.e. to exceed some threshold $n_0\in {\mathbb N}$), we arrive at the conclusion.
\section{Further remarks}

We have established a necessary and sufficient algebraic criterion which tells us whether a given homotopy class has infinitely many periodic points. The investigation may be refined in order to determine all mimimal periods of a given homotopy class of a functon $f:X\rightarrow X$, i.e. in order to determine the set $\mbox{MPer}\left(f\right)$. A second goal may be to determine the first homology morphisms which satisfy the algebraic condition of theorem 1 explicitly. This may wor for the $2$-torus and the bretzel surface for obvious reasons. However, we emphasize that the algebraic criterion of theorem 1 is an efficiently decidable criterion, i.e. as long we can compute the first order homology morphism of a given function $f:X\rightarrow X$ we may decide whether its homotopy class has infinitely many periodic points by computation of certain approximations of the zeros' of its characteristic polynomial. Well this can be done in an efficient way. Furthermore, let us note that the approach of fixed point theory of iterates from the point of view of real algebraic geometry is not limited to surfaces and orientable surfaces in particular but may be extended to higher-dimensional objects and non-orientable surfaces.


\begin{thebibliography}{199}

\bibitem{ABLSS}
Alsed\`{a}, L., Baldwin, S., Llibre, J., Swanson, R., and Szlenk, W. {  Minimal sets of Periods for torus maps via Nielsen numbers}

\bibitem{BBPT}
Brooks, R.B.S., Brown, R.F., Pak, J., and Taylor, D.H., { Nielsen number of maps of tori}, Proc. Amer. Math. Soc. 52 (1975), 398-400.


\bibitem{D}
Dold, A., { Lectures on Algebraic topology}, Springer, Berlin; Auflage: (2. Ed. of 1980), reprint (1995).


\bibitem{K}
Kampen, J.: { On fixed points of maps and iterated maps and applications.} Nonlinear Analysis 42 (2000), 509-532.

\bibitem{Sa}
Sarkovskii, A. N. { Coexistence of cycles of continuous maps of the line into itself,} {\it Ukraine Math. Zeitschrift}, {\bf 16}, 61-71.

\bibitem{SeT}
Seifert, H., Threlfall, W. { Lehrbuch der Topologie,} {\it AMS new edition}, 2004.

\bibitem{Sh}
Shiota, M., {Equivalence of differential functions, rational functions, and polynomials,} {\it Ann. Inst. Fourier}, 32, 4, 167-204, 2004.



\end{thebibliography}
\end{document}